\def\bC{\mathbb{C}}
\def\cH{\mathcal{H}}
\def\cM{\mathcal{M}}
\def\cO{\mathcal{O}}
\def\bP{\mathbb{P}}
\def\cP{\mathcal{P}}
\def\bZ{\mathbb{Z}}
\def\wt{\widetilde}
\DeclareMathOperator{\rat}{rat}
\DeclareMathOperator{\Spec}{Spec}
\newtheorem{thm}{Theorem}[section]
\newtheorem{lem}[thm]{Lemma}
\newtheorem{cor}[thm]{Corollary}
\newtheorem{prop}[thm]{Proposition}
\newtheorem{defn}[thm]{Definition}
\g@addto@macro\bfseries{\boldmath} % This makes math in section titles bold.
\begin{document}

\title{$\bZ/r\bZ$-equivariant covers of $\bP^1$ with moving ramification}

\author{Carl Lian}
\address{Institut f\"{u}r Mathematik, Humboldt-Universit\"{a}t zu Berlin, 12489 Berlin, Germany}
\email{liancarl@hu-berlin.de}

\author{Riccardo Moschetti}
\address{ Dipartimento di Matematica ``F. Casorati'', Universit\`{a} degli Studi di Pavia, Via Ferrata 5, 27100 Pavia, Italy}
\email{riccardo.moschetti@unipv.it}

\date{\today}

\begin{abstract}
    Let $X\to\bP^1$ be a general cyclic cover. We give a simple formula for the number of equivariant meromorphic functions on $X$ subject to ramification conditions at variable points. This generalizes and gives a new proof of a recent result of the second author and Pirola on \textit{hyperelliptic odd covers}.
\end{abstract}

\maketitle

\section{Introduction}

Let $\cH$ be a moduli space of branched covers of $\bP^1$, and let $\overline{\cH}$ be its compactification by Harris-Mumford admissible covers, see \cite{hm}. Let $\phi:\overline{\cH}\to\overline{\cM}_{g,n}$ be the map remembering the source of a branched cover, possibly with additional marked points (e.g. ramification points). 

Pushing forward the fundamental class by $\phi$ yields a cycle class on $\overline{\cM}_{g,n}$, which, by a theorem of Faber-Pandharipande, lies in the tautological ring, see \cite{fp}. While an algorithm is given to compute this class, it is in general quite impractical to implement, and few explicit formulas are known.

In the case where $\dim(\cH)=\dim(\cM_{g,n})$, so that $\phi$ is generically finite, computing this cycle class amounts to computing the degree of $\phi$, for which formulas were given (assuming the ramification points lie in distinct fibers) in \cite{lian_pencils} using limit linear series and Schubert calculus techniques. By definition, these degrees count maps $h:X\to\bP^1$ out of a fixed curve with ramification conditions imposed at possibly moving (variable) points.

A variant of this problem has been studied in \cite{mp}, in which $X$ is assumed to be hyperelliptic and $h$ is required to be equivariant for the hyperelliptic action. More precisely, it is proven that:

\begin{thm}\cite{mp}\label{hyperelliptic_thm}
Let $X$ be a general hyperelliptic curve of genus $g$ with Weierstraß points $P_1,\ldots,P_{2g+2}$. Fix integers $n_1,\ldots,n_{2g+2}\in\bZ_{\ge0}$ such that $\sum_{i=1}^{2g+2}n_i=g-1$. Consider all covers $h:X\to\bP^1$ of degree $4g$ such that:
\begin{itemize}
    \item $h$ is equivariant for the hyperelliptic involution on $X$ and the involution $z\mapsto -z$ on $\bP^1$,
    \item $h^{-1}(\infty)=\sum_{i=1}^{2g+2}(2n_i+1)P_i$,
    \item $h$ is unramified over $0\in\bP^1$, and 
    \item all remaining ramification points of $h$ are triple points.
\end{itemize}
Then, for any choice of $n_i$, the number of such covers (called \emph{hyperelliptic odd covers}) is $2^{2g}$.
\end{thm}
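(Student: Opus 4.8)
The strategy is to pass to quotients, turning the problem into an elementary count of polynomials in one variable, and then to identify the answer with the degree of a Veronese variety.

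\emph{Reduction.} Write the hyperelliptic map as $\pi\colon X\to\bP^1_t=X/\langle\sigma\rangle$, with $\pi(P_i)=b_i$, so that $X$ is the smooth model of $y^2=\prod_{i=1}^{2g+2}(t-b_i)$. Since $h$ is equivariant, $h\circ\sigma=-h$, so $h$ is anti-invariant and hence of the form $h=P(t)\big/\big(y\prod_i(t-b_i)^{n_i}\big)$ for a one-variable polynomial $P$; equivalently $f:=h^2=P^2/Q$ with $Q:=\prod_i(t-b_i)^{2n_i+1}$, and $X\cong\bP^1_t\times_{\bP^1_v}\bP^1_w$, where $\bP^1_v=\bP^1_w/\{w\mapsto-w\}$ and $f\colon\bP^1_t\to\bP^1_v$. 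The requirement $h^{-1}(\infty)=\sum_i(2n_i+1)P_i$ with no further poles forces $P(b_i)\neq0$ and $\deg P\le 2g$, and it then makes $h$ automatically of degree $4g$; moreover $h$ is determined, up to rescaling the target coordinate, by $[P]\in\bP^{2g}:=\bP\big(\{P:\deg P\le 2g\}\big)$. So it suffices to count the $[P]$ satisfying the two remaining conditions.

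\emph{The conditions.} Over $\bP^1_w\setminus\{0,\infty\}$ both double covers $\pi$ and $\bP^1_w\to\bP^1_v$ are \'etale, so the ramification of $h$ there matches, index by index (with two preimages of each branch point of $f$), the ramification of $f$ over $\bP^1_v\setminus\{0,\infty\}$. From $f'=P(2P'Q-PQ')/Q^2$ one reads off: ``$h$ unramified over $0$'' $\Longleftrightarrow$ $P$ has $\deg P$ distinct roots, none equal to a $b_i$ (together with $\deg P\ge 2g-1$, so that $h$ is unramified over the two points above $t=\infty$); ``all remaining ramification is triple'' $\Longleftrightarrow$ the polynomial $S:=(2P'Q-PQ')\big/\prod_i(t-b_i)^{2n_i}$ is a perfect square $T^2$ with $T$ having distinct roots, disjoint from the roots of $P$ and from the $b_i$, at which $f$ takes distinct finite nonzero values. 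Two numerical facts make $S$ a polynomial of the expected size: a leading-term cancellation gives $\deg(2P'Q-PQ')\le 6g-2$, and a local computation at each $b_i$ shows that $(t-b_i)^{2n_i}$ divides $2P'Q-PQ'$ \emph{exactly}, so $S$ has degree $\le 4g$ with $S(b_i)\neq0$.

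\emph{The count.} The crucial point is that $P\mapsto S(P)$ is \emph{linear} in $P$ and injective (if $2P'Q-PQ'=0$ then $P^2$ is proportional to $Q$, which is impossible: either the degrees clash, or, when $\deg P=2g$, one uses that at least $g+3$ of the $n_i$ vanish so $Q$ has simple roots and is not a square). Hence $S$ defines a linear embedding $\bP^{2g}\hookrightarrow\bP^{4g}=\bP\big(\{S:\deg S\le 4g\}\big)$. Inside $\bP^{4g}$, the locus of perfect squares $T^2$ with $\deg T\le 2g$ is the image of $\bP^{2g}=\bP\big(\{T:\deg T\le 2g\}\big)$ under $[T]\mapsto[T^2]$, cut out by a base-point-free, point- and tangent-separating subsystem of $|\cO_{\bP^{2g}}(2)|$; it is therefore an embedding with image a variety of dimension $2g$ and degree $\int_{\bP^{2g}}c_1(\cO(2))^{2g}=2^{2g}$. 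Granting transversality, the covers we seek are the points of the intersection of the linear $\bP^{2g}$ with this Veronese variety, of complementary dimension, lying in the open locus of honest covers; hence there are exactly $2^{2g}$ of them.

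\emph{The obstacle.} The substantive step is to show that, for $X$ general (equivalently for general $b_i$), this intersection is transverse, consists of $2^{2g}$ reduced points, and is contained in the open locus singled out above (distinct simple roots of $P$ and $T$, disjoint from the $b_i$, distinct finite nonzero critical values, $\deg P\ge 2g-1$). I expect to reduce this, by semicontinuity in the $b_i$, to exhibiting one favourable configuration --- for instance a degeneration in which branch points collide and the intersection breaks into a transverse union of explicitly computable pieces whose lengths multiply to $2^{2g}$ --- or alternatively to a direct tangent-space computation at a generic intersection point $S(P_0)=T_0^2$, showing that the only polynomials simultaneously of the form $S(P)$ and of the form $T_0\cdot U$ with $\deg U\le 2g$ are the scalar multiples of $T_0^2$.
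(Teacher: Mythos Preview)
Your approach is essentially identical to the paper's, specialized to the hyperelliptic case: your linear embedding $[P]\mapsto[S(P)]$ is exactly the paper's map $\psi$ (with $t_\infty=0$, so $P_\infty$ is a constant and the Segre factor is trivial), and your squaring map $[T]\mapsto[T^2]$ is exactly the paper's map $\rho$ (with $L=\{2\}$, $c_2=2g$). The intersection-number computation $\binom{2g}{2g}\cdot 2^{2g}\cdot\frac{(2g)!}{(2g)!}=2^{2g}$ then agrees on the nose.

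For the obstacle you flag, the paper resolves both parts by soft arguments rather than by degeneration. For the ``open locus'' issue, any intersection point not corresponding to an honest cover in $\cH_{\rat}$ defines a map $h'$ with strictly fewer than $k=2g$ branch points away from $0,\infty$ (because any coalescing of roots or cancellation between $P$ and $Q$ increases ramification over $0,\infty$ relative to the degree), so such $h'$ vary in a family of dimension $<2g-1$ and are avoided by a general choice of the $b_i$. For transversality, the paper uses exactly the second method you propose: a common tangent vector gives a first-order deformation $\widetilde{h'}$ of $h'$ fixing the $b_i$; since the forgetful map $\phi_{\rat}\colon\cH_{\rat}\to(\bP^1)^{m-3}$ is generically \'etale (generic smoothness in characteristic $0$), this deformation is trivial on $\cH_{\rat}$, forcing the perturbation of $P$ to be a scalar multiple of $P$ and the ramification divisor to be unchanged to first order. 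No explicit degeneration is needed.
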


Here, we have ramification conditions imposed at the \textit{fixed} points $P_i$, in addition to \textit{moving} (unspecified) triple ramification points away from $0,\infty$. The enumerative count may also be expressed as the degree of the morphism $\phi_{\text{HOC}}:\cH^{\text{HOC}}_{g,n_i}\to\cH_g$, where $\cH^{\text{HOC}}_{g,n_i}$ is the moduli space of hyperelliptic odd covers for the given choice of integers $n_i$, $\cH_g$ is the stack of hyperelliptic curves, and $\phi_{\text{HOC}}$ remembers the source curve. The proof expresses such covers as solutions to a certain differential equation and studies these solutions in terms of periods on $X$. 

In this work, we employ a more straightforward technique to give a natural generalization of this problem to arbitrary cyclic covers. Fix a general cyclic cover $\sigma:X\to\bP^1$ of degree $r$ with marked ramification points $P_{i,j}\in X$; the $\bZ/r\bZ$-orbits of the marked points are the sets $\{P_{i,j}\}$ where $i$ is fixed and $j$ varies. We will denote by $\cH_{g,\bZ/r\bZ,\xi}$ the moduli space parametrizing such $\sigma$, where $\xi$ is a collection of elements in $\bZ/r\bZ$ describing the monodromy over the branch points of $\sigma$, see \S\ref{cyclic_covers}. 

We enumerate $\bZ/r\bZ$-equivariant maps $h:X\to\bP^1$, where $\bZ/r\bZ$ acts on the target by multiplication of a coordinate $z$ by $r$-th roots of unity. We require $h$ to have two types of ramification points:
\begin{itemize}
    \item ramification at $P_{i,j}$ over $0,\infty\in\bP^1$, with common ramification indices (for fixed $i$) given by integers $|a_i|>0$; we take $a_i>0$ whenever $h(P_{i,j})=0$ and $a_i<0$ otherwise.
    \item ramification away from $0,\infty\in\bP^1$ at unspecified (and unordered) points of $X$ occurring in $k$ orbits of size $r$, with ramification indices given by the multiset $B=\{b_1,\ldots,b_k\}$.
\end{itemize}
We will denote by $\cH_{g,\bZ/r\bZ,\xi,B}$ the moduli space parametrizing such $h$, see \S\ref{Zr_equivariant_covers}. 

Counting $\bZ/r\bZ$-equivariant covers amounts to computing the degrees of the forgetful maps $\phi_B:\cH_{g,\bZ/r\bZ,\xi,B}\to \cH_{g,\bZ/r\bZ,\xi}$. Under appropriate conditions detailed in \S\ref{setup}, these maps are generically quasi-finite and the spaces are non-empty. Our main theorem is as follows (see also Theorem \ref{main_thm}):

\begin{thm}\label{main_thm_intro}
Under appropriate numerical constraints ensuring that $\cH_{g,\bZ/r\bZ,\xi,B}$ and $\cH_{g,\bZ/r\bZ,\xi}$ are non-empty of the same dimension (see (\ref{eq:RH_cyclic}), (\ref{eq:specialfibers_deg}), (\ref{eq:k_vs_m}), (\ref{eq:RH_h}), (\ref{eq:t_vs_b}), and Corollary \ref{cor:cong}), the number of $\bZ/r\bZ$-equivariant covers $h:X\to\bP^1$ of degree $d$ with ramification indices $|a_i|$ over $0,
\infty$ is:
\begin{equation*}
\binom{(t_0+t_\infty)/r}{t_0/r}\cdot \prod_{j=1}^{k}(b_j-1)\cdot\frac{k!}{\prod_{\ell\in L'}c'_\ell!}
\end{equation*}
where:
\begin{itemize}
    \item $t_0,t_\infty$ are the numbers of points in $h^{-1}(0),h^{-1}(\infty)$, respectively, that are not among the marked ramification points $P_{i,j}$ of $\sigma$, and
    \item $L'$ is the set of distinct ramification indices $b_j$ occurring away from $0,\infty\in\bP^1$, and for $\ell\in L'$, $c'_{\ell}$ is the number of orbits of ramification points with ramification index $\ell$.\footnote{We will see later that it is somewhat more natural to consider the set $L$ of distinct integers among $b_j-1$.}
\end{itemize}
\end{thm}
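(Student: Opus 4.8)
The plan is to reduce the enumeration of $\bZ/r\bZ$-equivariant maps $h\colon X\to\bP^1$ to a purely combinatorial count on the quotient $\bP^1 = X/(\bZ/r\bZ)$. Since $\sigma\colon X\to\bP^1$ is a fixed cyclic cover and $h$ is required to be equivariant, $h$ descends to a rational map $\bar{h}\colon\bP^1\to\bP^1$ on the quotients; the equivariance for the $z\mapsto\zeta z$ action on the target means that, after extracting an $r$-th root, $h$ corresponds to a section of a line bundle on $X$ that is an eigenvector for the $\bZ/r\bZ$-action, i.e. lies in a prescribed isotypic component determined by $\xi$. Concretely, writing the cyclic cover via an equation $y^r = f(x)$, an equivariant function $h$ is of the form $h = y^{s}\cdot g(x)$ for a suitable residue $s$ (forced by the ramification profile $\xi$ and the $a_i$ over $0,\infty$), where $g$ is a rational function of $x$ alone. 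Thus the data of $h$ is equivalent to the data of $g$, a rational function on $\bP^1$ with prescribed zeros and poles at the images of the $P_{i,j}$ (with multiplicities determined by the $a_i$ and $\xi$) and with $k$ additional ramification points of the composite downstairs, whose ramification indices come from $B$.

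\textbf{The key steps, in order.} First, I would set up the correspondence precisely: an equivariant $h$ of degree $d$ corresponds to $g = h/y^s \in \bC(x)$ of some degree $e$ determined by Riemann--Hurwitz applied to both $\sigma$ and $h$ (this uses \eqref{eq:RH_cyclic}, \eqref{eq:RH_h}, and the constraints on $t_0,t_\infty$), so that $g$ has $t_0/r$ zeros and $t_\infty/r$ poles away from the marked locus that are free to move, plus fixed zeros/poles at the marked points. Second, I would observe that the moving ramification points of $h$ away from $0,\infty$ are exactly the moving ramification points of $g$, and that a ramification point of $h$ of index $b_j$ corresponds to a ramification point of $g$ of index $b_j - 1$ after accounting for the ramification of $\sigma$ itself (this is why the footnote flags $L$ versus $L'$, and it explains the factor $\prod (b_j-1)$: a simple local computation at each such point shows each contributes a multiplicity of $b_j - 1$). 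Third, having fixed the ramification divisor of $g$ away from $0,\infty$ up to labeling, the count of such $g$ becomes: choose which of the $t_0/r + t_\infty/r$ free points over $0,\infty$ map to $0$ versus $\infty$ — this is the binomial $\binom{(t_0+t_\infty)/r}{t_0/r}$ — and then count rational functions $\bP^1\to\bP^1$ with the specified zero/pole divisor and specified (unordered) ramification profile elsewhere; the latter is the genus-$0$, two-special-fiber Hurwitz-type count, which by a classical computation (cf.\ the techniques of \cite{lian_pencils}) equals $\prod_j (b_j - 1)$ times the multinomial overcounting correction $k!/\prod_{\ell\in L'} c'_\ell!$ coming from the fact that the ramification points are unordered while the Hurwitz count orders them. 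Finally, I would check that the correspondence is a bijection on the level of the moduli spaces — in particular that every $g$ arising this way actually lifts to an honest equivariant $h$ on the \emph{given} general $X$ (no further conditions on the cover), and that distinct $g$ give distinct $h$ — which is where generality of $X$ and the congruence conditions of Corollary \ref{cor:cong} are used.

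\textbf{The main obstacle} I anticipate is the genus-$0$ branched-cover count with two completely prescribed fibers (over $0$ and $\infty$) and a prescribed but unordered ramification profile over the remaining branch points, yielding $\prod_j(b_j-1)\cdot k!/\prod_\ell c'_\ell!$. Morally this is a statement that the relevant space of rational functions on $\bP^1$ is cut out transversally and has the expected dimension, with the local index $b_j - 1$ at each moving point; making this rigorous requires either a direct linear-algebra / resultant argument identifying $g$ inside a linear system, or an appeal to a transversality result for Hurwitz spaces in genus $0$. The remaining steps — the descent to the quotient, the bookkeeping of ramification indices via Riemann--Hurwitz, and the binomial for distributing free preimages between $0$ and $\infty$ — are comparatively routine local computations, though one must be careful that the numerical constraints listed in the theorem statement are exactly what is needed to make each of them go through without remainder.
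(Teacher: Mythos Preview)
Your high-level strategy---descend to the quotient and count rational maps $\bP^1\to\bP^1$---is exactly the paper's approach, and you correctly identify both the lifting step (Corollary~\ref{cor:cong}) and that the heart of the matter is a genus-$0$ transversality statement. However, your proposed execution of the genus-$0$ count contains two concrete errors that would prevent the argument from going through as written.

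First, the claim that a moving ramification point of $h$ with index $b_j$ becomes a ramification point of your $g$ with index $b_j-1$ is false. The moving ramification points lie in \emph{free} $\bZ/r\bZ$-orbits, so $\sigma$ is unramified there; consequently the induced map $h'\colon\bP^1\to\bP^1$ on quotients has ramification index exactly $b_j$ at the image, not $b_j-1$. (Your $g(x)$ in $h=y^{s}g(x)$ is not the quotient map $h'$; its ramification is unrelated to the $b_j$.) Second, the binomial $\binom{(t_0+t_\infty)/r}{t_0/r}$ cannot be read as ``choosing which free preimages go to $0$ versus $\infty$'': the integers $t_0,t_\infty$ are fixed by the data $(a_i)$ via \eqref{eq:specialfibers_deg}, so there is no such choice. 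In the paper, both factors arise from intersection theory in $\bP^b$: one parametrizes $(P_0,P_\infty)\in\bP^{t_0/r}\times\bP^{t_\infty/r}$ and maps to the degree-$b$ polynomial $\psi_{h'}$ cutting out the residual ramification divisor of $h'$ (a Segre map followed by a linear projection, of degree $\binom{t/r}{t_0/r}$), and intersects with the locus of polynomials of the shape $\prod_{\ell}R_\ell^{\ell}$ (a product of Veronese and Segre maps, of degree $\prod_j(b_j-1)\cdot k!/\prod_\ell c_\ell!$); B\'ezout then gives the formula once one checks, for general $\lambda_i$, that every intersection point lies in $\cH_{\rat}$ and that the intersection is transverse. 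The factor $\prod(b_j-1)$ is thus a Veronese degree, not a local ramification shift, and the binomial is a Segre degree, not a combinatorial distribution.
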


Among the required numerical constraints is that the (signed) ramification indices $a_i$ of $h$ above $0,\infty$ satisfy a certain congruence condition in terms of the monodromy of $\sigma$, see Corollary \ref{cor:cong}. As long as this condition is satisfied, the final formula does not depend on the $a_i$; this is also the case in Theorem \ref{hyperelliptic_thm}.

Theorem \ref{hyperelliptic_thm} is the special case of Theorem \ref{main_thm_intro} where $r=2$, $t_0=4g$, $t_\infty=0$, $k=2g$, and the $b_i$ are all equal to 3. In particular, this covers also the case of elliptic curves with an odd spin structure in degree four, which has been addressed in \cite{fmnp} and \cite{lian_pencils}. 

For the proof of Theorem \ref{main_thm_intro}, the key observation is that the equivariant covers in question may be enumerated after passing to the scheme-theoretic quotients by the $\bZ/r\bZ$-actions on the source and target, so we reduce to a question of counting maps $h':\bP^1\to\bP^1$ with certain ramification profiles. We address this question in \S\ref{counting_meromorphic_functions}: the moving ramification conditions are imposed by way of Segre and Veronese embeddings, and we conclude our main result by analyzing the intersection that arises.

Lastly, we remark that we expect the method to work for equivariant covers for more general finite subgroups of $\text{PGL}_{2}(\bC)$, though we have not pursued this here.

\subsection{Acknowledgments}

C.L. was supported by an NSF Postdoctoral Fellowship, grant DMS-2001976. R.M. was supported by MIUR: Dipartimenti di Eccellenza Program (2018-2022)-Dept. of Math. Univ. of Pavia and by PRIN Project \emph{Moduli spaces and Lie theory} (2017). We thank Gavril Farkas, Pietro Pirola and Johannes Schmitt for helpful comments and correspondence.

\section{Setup}\label{setup}

In this section, we introduce the various moduli spaces and the relations between them. We work over $\bC$.

\subsection{Cyclic covers}\label{cyclic_covers}

Let $X$ be a smooth, projective, and connected curve of genus $g$, and let $\sigma:X\to\bP^1$ be a cyclic cover of degree $r$, ramified over $Q_1,\ldots,Q_m\in \bP^1$. Let $\xi=(\xi_1,\ldots,\xi_m)\in (\bZ/r\bZ)^m$ be the tuple of monodromy elements over the $Q_i$, and let $\cH_{g,\bZ/r\bZ,\xi}$ be the moduli space of such covers, which is non-empty and smooth of dimension $m-3$ as long as $\sum_i \xi_i=0$ in $\bZ/r\bZ$ and $\gcd(\{\xi_1, \ldots, \xi_m, r\})=1$, see \cite[Theorem 3.7 and Proposition 3.20]{svz}. We will assume throughout that $m\ge3$, but the results still hold in the case $m=2$, with minor modifications. Taking $\xi_i\in\{0,1,2,\ldots,r-1\}$, we may express $X$ birationally as the curve given by the equation $y^r=(x-\lambda_1)^{\xi_1}\cdots(x-\lambda_{m})^{\xi_m}$, and $\sigma$ as the projection to the $x$-coordinate.

For each $i$, let $f_i=\gcd(\xi_i,r)$, and denote by $P_{i,j}\in X$, for $j=1,2,\ldots,f_i$, the pre-images of $Q_i$ under $\sigma$. Let $e_i=\frac{r}{f_i}$ be the common ramification index of the $P_{i,j}$. By Riemann-Hurwitz, we have
\begin{equation}\label{eq:RH_cyclic}
2g-2=\sum_{i=1}^{m}f_i(e_i-1)-2r=r(m-2)-\sum_{i=1}^{m}f_i.
\end{equation}

We remark that the hyperelliptic locus $\cH_g \subset \cM_g$ is the image of the map $\cH_{g,\bZ/2\bZ,(1^{2g+2})}\to\cM_g$ remembering the source of the cover, see \cite[\S 4.4]{svz}.

\subsection{$\bZ/r\bZ$-equivariant covers} \label{Zr_equivariant_covers}

Let $g:\bP^1\to\bP^1$ be the degree $r$ map totally ramified over $0,\infty$. Consider degree $d$ covers $h:X\to\bP^1$ equivariant under the $\bZ/r\bZ$-actions induced by $\sigma$ and $g$, so that we have a commutative diagram
\begin{equation*}
\xymatrix{
X \ar[r]^{\sigma} \ar[d]^{h} & \bP^1 \ar[d]^{h'}\\
\bP^1 \ar[r]^{g} & \bP^1
}
\end{equation*}
where $h':\bP^1\to\bP^1$ also has degree $d$, and $\sigma:X \to \bP^1$ is a cyclic cover of degree $r$ as defined in the previous section.

For each $i$, the ramification points $P_{i,j}$ of $\sigma$ must all map to either 0 or $\infty$ under $h$; let $S_0$ be the set of $i$ such that $h(P_{i,j})=0$ and similarly define $S_\infty$. For $i\in S_0$, let $a_i>0$ be the common ramification index of the $P_{i,j}$ over 0, and for $i\in S_\infty$, let $-a_i>0$ be the ramification index of $P_{i,j}$ over $\infty$.

Suppose in addition that there are $t_0$ points not among the $P_{i,j}$ mapping to $0$ under $h$ and similarly define $t_\infty$. We assume that all such points are unramified. The sets of pre-images of $0,\infty$ outside of the $P_{i,j}$ are disjoint unions of free $\bZ/r\bZ$-orbits, so in particular $r|t_0,t_\infty$. Note that we must have
\begin{equation}\label{eq:specialfibers_deg}
d=t_0+\sum_{i\in S_0}a_if_i=t_\infty+\sum_{i\in S_\infty}(-a_if_i).
\end{equation}

Suppose now that $h$ has $K=kr$ additional points of ramification, which decompose into $k$ orbits of size $r$, and that no two such points appear in the same fiber of $h$. Let $B=\{b_1,\ldots,b_k\}$ denote the (unordered) multiset of ramification indices (greater than 1) that appear, and let $c_\ell$ be the number of the $b_1,\ldots,b_k$ equal to $\ell+1$. Let $L$ be the set of $\ell$ for which $c_\ell\neq0$. 

Let $\cH_{g,\bZ/r\bZ,\xi,B}$ be the space of $\bZ/r\bZ$-equivariant covers as described above; we suppress the data of the $a_i$ and the values of $t_0,t_\infty$ for notational convenience. We have a quasi-finite and \'{e}tale map $\delta_B:\cH_{g,\bZ/r\bZ,\xi,B}\to\cP_{k,r}$, where $\cP_{k,r}$ is the smooth variety parametrizing unordered collections of $k$ distinct $\bZ/r\bZ$ orbits on $\bP^1$, up to the $\bC^{*}$ action preserving $0,\infty$. In particular, $\cH_{g,\bZ/r\bZ,\xi,B}$ is smooth of dimension $k-1$. We are interested in computing degrees of the forgetful maps $\phi_B:\cH_{g,\bZ/r\bZ,\xi,B}\to \cH_{g,\bZ/r\bZ,\xi}$; in order for $\phi_B$ to expect a non-zero answer, we require
\begin{equation}\label{eq:k_vs_m}
    k=m-2.
\end{equation}

Set $t=t_0+t_\infty$. By Riemann-Hurwitz, we have 
\begin{equation}\label{eq:RH_h}
2g+2d-2=\sum_{i=1}^{m}(|a_i|-1)f_i+\sum_{j=1}^{k}r(b_j-1).
\end{equation}
Substituting $2d=t+\sum_{i=1}^{m}|a_i|f_i$ and $2g-2=r(m-2)-\sum_{i=1}^{m}f_i$, and $k=m-2$, we find that
\begin{equation}\label{eq:t_vs_b}
t=r\sum_{j=1}^{k}(b_j-2).
\end{equation}

\subsection{Congruence condition on $a_i$} \label{congruence_condition}

In this section, we show that in order for equivariant covers $h$ to exist, the vanishing orders $a_i$ need to satisfy a certain congruence condition. We give two perspectives, one algebraic and one topological.

Consider the map $h'$. The fiber over 0 consists of the points $Q_i$ previously defined for $i\in S_0$, with ramification index $a_i$, and $t_0/r$ points of ramification index $r$ each. Similarly, the fiber over $\infty$ consists of $Q_i$ for $i\in S_\infty$ with ramification index $-a_i$ and $t_\infty/r$ points of ramification index $r$ each. Moreover, it is ramified at $k$ additional (unlabelled) points with ramification index $b_j$.

We may therefore represent the map $h'$ by the meromorphic function 
\begin{equation*}
    h'(x)=\prod_{i=1}^{m}(x-\lambda_i)^{a_i}\cdot\left(\frac{P_0(x)}{P_{\infty}(x)}\right)^r
\end{equation*} 
where $P_0, P_\infty$ are non-zero polynomials of degree $t_0/r,t_\infty/r$, respectively. We also allow one of $P_0, P_\infty$ to have strictly smaller degree, which occurs when $h'$ has a pole or zero at $x=\infty$.

\begin{lem}\label{congruence_lemma}
$h'$ lifts to a $\bZ/r\bZ$-equivariant cover $h:X\to\bP^1$ if and only if $a_i\equiv\xi_i\pmod{r}$ for all $i$. If this is the case, the lift is unique, up to composition with the $\bZ/r\bZ$-action on the target.
\end{lem}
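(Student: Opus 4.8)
The plan is to analyze the cyclic cover $\sigma:X\to\bP^1$ explicitly in terms of its defining equation and see when the pullback $g^*h' = h\circ\sigma$ admits an $r$-th root giving an equivariant map to the target $\bP^1$. Recall that $X$ is birationally the curve $y^r = \prod_{i=1}^m (x-\lambda_i)^{\xi_i}$ with $\sigma$ the $x$-projection, and that the $\bZ/r\bZ$-action on $X$ is $y\mapsto\zeta y$ for $\zeta$ a primitive $r$-th root of unity, while the action on the target $\bP^1$ of $h$ is $z\mapsto\zeta z$. An equivariant lift $h$ of $h'$ is exactly a rational function $z$ on $X$ with $z^r = \sigma^*h' = h'(x)$ (as elements of the function field $\bC(X) = \bC(x)[y]/(y^r - \prod(x-\lambda_i)^{\xi_i})$) and on which $\zeta$ acts with weight $1$. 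So the question becomes: for which $a_i$ does $h'(x) = \prod_i (x-\lambda_i)^{a_i}\cdot (P_0/P_\infty)^r$ become an $r$-th power in $\bC(X)$, with the correct equivariance weight?

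First I would observe that the factor $(P_0(x)/P_\infty(x))^r$ is already an $r$-th power in $\bC(x)\subset\bC(X)$, so it can be discarded; we must decide when $\prod_i (x-\lambda_i)^{a_i}$ is an $r$-th power in $\bC(X)$ times a function of weight $1$. The key computation is that, modulo $r$-th powers in $\bC(X)$, the class of $\prod_i (x-\lambda_i)^{a_i}$ in $\bC(X)^*/(\bC(X)^*)^r$ depends only on the residues $a_i \bmod r$, and that $y$ itself satisfies $y^r = \prod_i (x-\lambda_i)^{\xi_i}$, so $\prod_i (x-\lambda_i)^{\xi_i}$ is an $r$-th power in $\bC(X)$ (namely $y^r$), with $y$ having weight $1$. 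More generally, $y^{n}$ has weight $n \bmod r$ and equals $\prod_i(x-\lambda_i)^{n\xi_i}$ up to $r$-th powers in $\bC(x)$. Since $\gcd(\{\xi_1,\dots,\xi_m,r\})=1$, I would check — using the structure of the Kummer extension and the fact that the $\lambda_i$ are general (or simply that the $x-\lambda_i$ are distinct primes in $\bC[x]$) — that the subgroup of $\bC(X)^*/(\bC(X)^*)^r$ generated by the classes of $x-\lambda_i$ is precisely $(\bZ/r\bZ)^m / \langle(\xi_1,\dots,\xi_m)\rangle$, i.e. the only relation among them is the one coming from $y$. Hence $\prod_i (x-\lambda_i)^{a_i}$ is an $r$-th power in $\bC(X)$ if and only if $(a_1,\dots,a_m)$ is a multiple of $(\xi_1,\dots,\xi_m)$ modulo $r$; combined with the weight bookkeeping (the lift has weight $1$, forcing the multiple to be exactly $1$), this yields $a_i\equiv\xi_i\pmod r$ for all $i$. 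The necessity direction can also be seen topologically: going around a small loop about $Q_i = \lambda_i$ downstairs, the monodromy of $\sigma$ is $\xi_i$ and that of $h'$ near $0$ or $\infty$ is $a_i$, and equivariance of the lift forces these to agree in $\bZ/r\bZ$; this is the alternative perspective promised in the paragraph before the lemma.

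For uniqueness: if $z_1, z_2$ are two equivariant lifts, then $(z_1/z_2)^r = 1$ in $\bC(X)$, so $z_1/z_2$ is a constant $r$-th root of unity, which is exactly composition with the $\bZ/r\bZ$-action on the target; and existence of at least one lift follows by taking $z = y^{a}\cdot(P_0/P_\infty)\cdot\prod_i(x-\lambda_i)^{(a_i - a\xi_i)/r}$ for the appropriate $a$ (here $a=1$), once we know the exponents $(a_i - \xi_i)/r$ are integers. One must also verify that this $z$, as a map $X\to\bP^1$, has exactly the prescribed ramification — but that is automatic since $h = (g\text{-coordinate of }z)$ and $z^r = \sigma^*h'$, so ramification of $h$ is determined fiberwise by that of $h'$ and $\sigma$ via the commutative square, with no new ramification introduced away from where $h'$ or $\sigma$ ramify.

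\medskip

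The main obstacle I anticipate is the precise identification of the subgroup of $\bC(X)^*/(\bC(X)^*)^r$ generated by the $x-\lambda_i$: one needs that $\prod_i(x-\lambda_i)^{a_i}$ is an $r$-th power in $\bC(X)$ \emph{only} when forced to be by the single relation $y^r = \prod_i(x-\lambda_i)^{\xi_i}$, with no accidental relations. This is where the hypothesis $\gcd(\{\xi_i,r\})=1$ (ensuring $X$ is connected, i.e. $\bC(X)/\bC(x)$ is a genuine degree-$r$ Kummer extension rather than a product of lower-degree ones) and the distinctness of the $\lambda_i$ enter essentially; concretely, one compares valuations at the places of $X$ over each $\lambda_i$, where $x-\lambda_i$ has valuation $e_i = r/\gcd(\xi_i,r)$ and any $r$-th power has valuation divisible by $r$, to pin down the constraints. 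I expect this to reduce to a clean linear-algebra statement over $\bZ/r\bZ$ which can be dispatched without much difficulty once set up correctly.
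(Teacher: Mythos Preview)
Your proposal is correct and follows essentially the same route as the paper: both reduce to finding an $r$-th root of $h'(x)$ in the function field $\bC(X)$, use the Kummer/Galois structure to conclude that any such root has the form $f_0\cdot y^a$ with $f_0\in\bC(x)$, deduce that $(a_i)\equiv a(\xi_i)\pmod r$, and then impose the weight-$1$ equivariance condition to force $a\equiv 1$. The paper dispatches your ``main obstacle'' in one line by observing that any $\alpha\in\bC(X)$ with $\alpha^r\in\bC(x)$ is a $\tau$-eigenvector and hence lies in a single summand of the decomposition $\bC(X)=\bigoplus_{a=0}^{r-1}\bC(x)\cdot y^a$; this is slicker than the valuation check you sketch at the end, which on its own only shows that the divisor of $\prod_i(x-\lambda_i)^{a_i}$ is $r$-divisible and would still need the Kummer-theoretic input to rule out non-principal $r$-th roots of the divisor class.
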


\begin{proof}
In order to lift such an $h'$, the meromorphic function $h$ must send a general point $(x,y)$ on $X$ to an $r$-th root of $h'(x)$, that is, such an $r$-th root must exist in the function field $\bC(X)=\bC(x)[\sqrt[r]{\prod_{i=1}^{m}(x-\lambda_i)^{\xi_i}}]$ of $X$. The field extension $\bC(X)/\bC(x)$ is Galois with Galois group $G\cong\bZ/r\bZ$, corresponding to the cyclic cover $\sigma$. Write $f=(x-\lambda_i)^{\xi_i}$ and let $\tau\in G$ be a generator such that
\begin{equation*}
\tau(\sqrt[r]{f})=\omega\cdot \sqrt[r]{f},
\end{equation*}
for some primitive root of unity $\omega\in\bC$, so that
\begin{equation*}
\tau((\sqrt[r]{f})^a)=\omega^a\cdot (\sqrt[r]{f})^a,
\end{equation*}
for $a=0,1,\ldots,r-1$.

Suppose now that $h'(x)$ has an $r$-th root $\alpha\in\bC(X)$, so that $\alpha^r\in \bC(x)$. Then, $\tau(\alpha)^r=\tau(\alpha^r)\in\bC(x)$, so $\tau(\alpha)^r=\alpha^r$. Thus, $\tau(\alpha)=\omega\cdot\alpha$ for some $r$-th root of unity $\omega$. This immediately implies that $\alpha$ is of the form $f_0\cdot(\sqrt[r]{f})^a$ for some $f_0\in\bC(x)$, so $h'=f_0^r\cdot f^a$. In particular, modulo $r$, the $m$-tuple $(a_i)$ of exponents appearing in $h'$ on the factors $(x-\lambda_i)$ must be a multiple of the $m$-tuple $(\xi_i)$ of exponents appearing in $f$.

Now, suppose that $a_i\equiv a\xi_i\pmod{r}$ for all $i$. Then, up to multiplication by an $r$-th root of unity (corresponding to the cyclic action on $\bP^1$), we need $h(x,y)=y^a\cdot\prod_{i=1}^{m}(x-\lambda_i)^{(a_i-a\xi_i)/r}\cdot\left(\frac{P_0(x)}{P_{\infty}(x)}\right)$. However, in order for such an $h$ to be $\bZ/r\bZ$-equivariant, we need $a\equiv1\pmod{r}$.
\end{proof}

We may also see the necessary condition $a_i\equiv\xi_i\pmod{r}$ from the point of view of monodromy as follows. Suppose first that $a_i>0$, and let $\gamma_z$ be a small loop with basepoint $z_0\in\bP^1$ winding  $a_i$ times counterclockwise around 0; the orientation of $\bP^1$ is given by the complex structure. Then, given a basepoint $y_0\in\bP^1$ such that $h'(y_0)=z_0$, $\gamma_z$ lifts under $h'$ uniquely to a loop $\gamma_y$ winding once around $\lambda_i$. Next, given a basepoint $x_0\in X$ such that $\sigma(x_0)=y_0$, $\gamma_y$ lifts uniquely to a path in $X$ travelling from $x_0$ to $\tau^{\xi_i}\cdot x_0$, where $\tau\in \bZ/r\bZ$ is the generator acting on $T_{x_0}X$ by $e^{2\pi i/r}$.

On the other hand, we may also lift $\gamma_z$ to $X$ via the maps $g$ and $h$: letting $y'_0=h(x_0)$, we find that $\gamma_z$ lifts under $g$ uniquely to a path $\gamma'_y$ in $\bP^1$ travelling from $y'_0$ to $\tau^{a_i}\cdot y'_0$. Now, we must have $h(\tau^{\xi_i}\cdot x_0)=\tau^{a_i}\cdot y'_0$, from which we conclude $a_i\equiv \xi_i\pmod{r}$ from the equivariance of $h$.

If $a_i<0$ instead, we may follow the same argument, except now $\gamma_z$ will be a path with winding number $-a_i$ around $\infty$, and the monodromy of $g$ around $\infty$ is the opposite of that around 0.

We have therefore shown that:

\begin{cor}\label{cor:cong}
The space of equivariant covers $\cH_{g,\bZ/r\bZ,\xi,B}$ is empty unless $a_i\equiv\xi_i\pmod{r}$ for all $i$.
\end{cor}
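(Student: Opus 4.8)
The plan is to deduce the corollary directly from Lemma \ref{congruence_lemma}. Suppose $\cH_{g,\bZ/r\bZ,\xi,B}$ is non-empty, so that there exists a $\bZ/r\bZ$-equivariant cover $h:X\to\bP^1$ fitting into the commutative square of \S\ref{Zr_equivariant_covers}. First I would check that the induced map $h'$ on the scheme-theoretic quotients has exactly the ramification profile recorded above: the marked points $Q_i$ with $i\in S_0$ (resp. $i\in S_\infty$) occur over $0$ (resp. $\infty$) with ramification index $a_i$ (resp. $-a_i$), together with $t_0/r$ (resp. $t_\infty/r$) further simple orbits collapsing to points of ramification index $r$, and $k$ additional ramification points with indices $b_j$. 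This identifies $h'$, up to the $\bC^*$-action preserving $0,\infty$, with a meromorphic function of the shape $\prod_{i=1}^m (x-\lambda_i)^{a_i}\cdot(P_0(x)/P_\infty(x))^r$ as in \S\ref{congruence_condition}. Since $h$ is by construction a $\bZ/r\bZ$-equivariant lift of $h'$ along $\sigma$ and $g$, Lemma \ref{congruence_lemma} forces $a_i\equiv\xi_i\pmod{r}$ for every $i$, which is the assertion.

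Alternatively — and this is the argument already sketched immediately before the corollary — one can bypass the algebraic lemma entirely and run a purely topological monodromy computation. Here I would fix compatible basepoints $z_0\in\bP^1$ on the base of $g$, $y_0\in\bP^1$ with $h'(y_0)=z_0$, $x_0\in X$ with $\sigma(x_0)=y_0$, and $y_0'=h(x_0)$, and take a small loop $\gamma_z$ winding $a_i$ times around $0$ (or $-a_i$ times around $\infty$, when $a_i<0$). Lifting $\gamma_z$ first through $h'$ and then through $\sigma$ produces a path in $X$ from $x_0$ to $\tau^{\xi_i}\cdot x_0$, while lifting through $g$ and then through the equivariant $h$ produces a path landing above $\tau^{a_i}\cdot y_0'$; equivariance of $h$ then identifies the two, yielding $a_i\equiv\xi_i\pmod{r}$.

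I expect no serious obstacle here: the real content lies in Lemma \ref{congruence_lemma} (for the algebraic route) or in correctly bookkeeping orientations and the fact that the monodromy of $g$ around $\infty$ is the negative of that around $0$ (for the topological route). The only genuinely substantive point to verify is that an equivariant $h$ really does descend to a well-defined finite map $h'$ on the quotients with the stated ramification data — that is, that $\sigma$ and $g$ realize $X/(\bZ/r\bZ)$ and the target quotient — but this is immediate from the commutativity and equivariance of the square, so the corollary follows with essentially no further work.
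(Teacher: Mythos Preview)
Your proposal is correct and follows essentially the same approach as the paper: the corollary is stated immediately after Lemma \ref{congruence_lemma} and the monodromy discussion, and the paper's ``proof'' is precisely the observation that either of these yields the claim, which is exactly what you do. Your only addition is making explicit that the induced $h'$ on quotients has the asserted ramification data and hence the displayed meromorphic form, but the paper records this too in the paragraph preceding the lemma.
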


Henceforth we assume this congruence condition on the $a_i$.

\subsection{Reduction to $h'$}\label{reduction_to_genus0}

The main idea which allows us to count the number of $\bZ/r\bZ$-equivariant covers $h:X \to\bP^1$, is that we can count maps $h':\bP^1\to\bP^1$ such that $h' \circ \sigma=g \circ h$ instead. In this section we formulate this statement in terms of moduli spaces.

Let $\cH_{\rat}$ be the moduli space of covers $h':\bP^1\to\bP^1$ with ramification profiles dictated by $h$ upon passing to the scheme-theoretic $\bZ/r\bZ$-quotients. That is, the zeroes and poles of $h'$ are given by the $\lambda_i$ and the roots of $P_0, P_\infty$ and have orders exactly $|a_i|,r$, respectively, and $h'$ is branched over $k$ additional \textit{distinct} points with orders governed by $B$.

The source $\bP^1$ has a fixed choice of coordinate $x$, whereas the target $\bP^1$ has fixed points $0,\infty$, but the functions $h'$ are taken only up to a constant factor. That is, two covers are considered isomorphic if they agree up to scaling on the target. 

Now, we have a quasi-finite and \'{e}tale morphism $\delta_{\rat}:\cH_{\rat}\to\cP_k$, where $\cP_k$ is the space of collections of $k$ unordered branch points on the target, up to scaling. Thus, $\cH_{\rat}$ is smooth of dimension $k-1=m-3$.

On the other hand, we have a morphism $\phi_{\rat}:\cH_{\rat}\to(\bP^1)^{m-3}$ remembering the $\lambda_i$ for $i\ge4$. The source and target have the same dimension, and by generic smoothness, the general fiber of $\phi_{\rat}$ consists of a finite number of reduced points equal to the degree of $\phi_{\rat}$. The above discussion shows that $\deg(\phi_{\rat})=\deg(\phi_B)$.

\subsection{Main result}
We now restate the main theorem.

\begin{thm}\label{main_thm}
Assume the numerical conditions (\ref{eq:RH_cyclic}), (\ref{eq:specialfibers_deg}), (\ref{eq:k_vs_m}), (\ref{eq:RH_h}), (\ref{eq:t_vs_b}), as well as the congruence condition given in Corollary \ref{cor:cong}, so that the map  $\phi_B:\cH_{g,\bZ/r\bZ,\xi,B}\to \cH_{g,\bZ/r\bZ,\xi}$ has non-empty source and target of the same dimension. Then, the degree of $\phi_B$ is
\begin{equation*}
\binom{t/r}{t_0/r}\cdot \prod_{j=1}^{k}(b_j-1)\cdot\frac{k!}{\prod_{\ell\in L}c_\ell!}.
\end{equation*}
\end{thm}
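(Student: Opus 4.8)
The plan is to prove Theorem \ref{main_thm} by computing $\deg(\phi_{\rat})$, since $\deg(\phi_B)=\deg(\phi_{\rat})$ by the reduction in \S\ref{reduction_to_genus0}. So I would reduce entirely to the following purely genus-zero problem: count degree-$d$ maps $h':\bP^1\to\bP^1$ (taken up to scaling on the target) whose zero/pole divisor over $0,\infty$ is prescribed — with the zeros of $h'$ located at the $m$ fixed points $\lambda_i$ with $i\in S_0$ (orders $|a_i|$) together with $t_0/r$ variable points of order $r$, and analogously for the poles — such that $h'$ has $k$ further, necessarily distinct, branch points with ramification profile governed by $B$, and such that $\lambda_4,\dots,\lambda_m$ take prescribed general values. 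Writing $h'(x)=\prod_i(x-\lambda_i)^{a_i}\cdot(P_0(x)/P_\infty(x))^r$ as in \S\ref{congruence_condition}, the unknowns are the coefficients of $P_0,P_\infty$ (which cut out the variable preimages of $0,\infty$) together with the locations and shape of the extra ramification.

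The key idea for imposing the moving ramification conditions is to express them via incidence with Veronese and Segre varieties. Concretely, after using the $\bC^*$-scaling and the freedom to normalize (fixing $\lambda_1,\lambda_2,\lambda_3$, say) to rigidify the source, the pair $(P_0,P_\infty)$ determines a point of a projective space $\bP^N$, and the condition that $h'$ (equivalently the appropriate resultant/discriminant-type polynomial built from $h'-c$ as $c$ varies) acquire a ramification point of index $b_j$ translates into passing through a Veronese-type locus; having $k$ such orbits then becomes an intersection of the corresponding Segre/Veronese subvarieties inside $\bP^N$. I expect the intersection to be set-theoretically transverse for general $\lambda_i$ and branch points, so the degree is computed by multiplying the relevant classes: each ramification point of index $b_j$ contributes a factor $(b_j-1)$ (the local degree of the condition "order of vanishing $\ge b_j$" cutting down a pencil, this being the classical multiplicity of the discriminant), which accounts for $\prod_{j=1}^k(b_j-1)$; the binomial $\binom{t/r}{t_0/r}$ arises from the choice of how the $t/r$ variable orbit-points distribute between $h^{-1}(0)$ and $h^{-1}(\infty)$, i.e. from a degree computation on the component parametrizing $(P_0,P_\infty)$ of total degree $t/r$; and the multinomial $k!/\prod_{\ell\in L}c_\ell!$ is the combinatorial factor accounting for the $k$ branch points being \emph{unordered} while the profile $B$ has $c_\ell$ repetitions of each value.

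In more detail, the steps I would carry out are: (1) set up $\cH_{\rat}$ explicitly as a locally closed subvariety of a projective space of pairs $(P_0,P_\infty)$ (or a product thereof recording the extra branch locus), using the explicit form of $h'$; (2) interpret the "variable ramification of index $b_j$" conditions as incidences with explicitly described subvarieties — the derivative $(h')'$, or better the discriminant of $h'(x)-c\,y^{?}$ (homogenizing), vanishing to the right order — and identify their classes; (3) verify the dimension count using (\ref{eq:t_vs_b}) and $k=m-3$ so that the expected intersection is zero-dimensional; (4) establish transversality/reducedness of the intersection for general parameters, invoking generic smoothness of $\phi_{\rat}$ (already noted in \S\ref{reduction_to_genus0}) so that the degree equals the number of solutions with multiplicity one; and (5) evaluate the resulting intersection number, extracting the three factors. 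The main obstacle I anticipate is step (2) together with step (4): pinning down the correct ambient geometry so that the "moving ramification" conditions become honest incidence conditions of the expected codimension, and then proving that the arising Segre/Veronese intersection is actually transverse — i.e. ruling out excess intersection — rather than merely bounding it. Controlling the contribution of degenerate configurations (e.g. where two would-be branch points collide, or where extra ramification sneaks over $0,\infty$) is where the genericity of $\sigma$ and of the $\lambda_i$ must be used carefully; everything else is bookkeeping with binomial and multinomial coefficients.
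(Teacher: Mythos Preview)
Your plan is essentially the paper's own proof: reduce to $\phi_{\rat}$, parametrize $(P_0,P_\infty)$ by $\bP^{t_0/r}\times\bP^{t_\infty/r}$, impose the moving ramification via a Segre/Veronese construction, and finish with B\'{e}zout plus a transversality check for general $\lambda_i$. Two small sharpenings: the dimension count uses $k=m-2$ (not $m-3$), and rather than a discriminant of $h'-c$, the paper works directly with the normalized derivative $\psi_{h'}(x)=\big(\prod_i(x-\lambda_i)^{1-a_i}\big)\,P_\infty^{r+1}P_0^{1-r}\,\tfrac{d}{dx}h'(x)$, which gives a clean bilinear map $\psi:\bP^{t_0/r}\times\bP^{t_\infty/r}\to\bP^b$ to intersect with the Veronese--Segre image $\rho:\prod_{\ell\in L}\bP^{c_\ell}\to\bP^b$, $(R_\ell)\mapsto\prod_\ell R_\ell^\ell$.
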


In the case of hyperelliptic odd covers for which all Weierstraß points live over $\infty$ (with fixed ramification indices $a_i$), we have $r=2$, $m=2g+2$, $\xi_i\equiv a_i\equiv1\pmod{2}$ for all $i$, $d=t=t_0=4g$, $k=2g$, and $b_j=3$ for all $j$. Thus, $L=\{2\}$ and $c_{2}=2g$, so we find that $\deg(\phi)=2^{2g}$, recovering the main result of \cite{mp}.

Suppose instead that $r$ is arbitrary and $B=\{2,\ldots,2\}$, that is, the moving ramification conditions are generic (and thus vacuous), we must have $t=0$ by (\ref{eq:t_vs_b}). Then, in the notation of \S\ref{congruence_condition}, we must have simply $h'(x)=\prod_{i=1}^{m}(x-\lambda_i)^{a_i}$, and indeed Theorem \ref{main_thm} gives $\deg(\phi_B)=1$.

\subsection{Theta characteristics}
We comment here on one additional aspect of the case $r=2$ which plays an important role in the computation of \cite{mp}, namely that of the theta characteristic associated with a hyperelliptic odd cover, and how the situation changes in our more general setting.

Recall that if $h:X\to\bP^1$ is any cover for which all ramification indices are odd, one may naturally associate to $h$ a theta characteristic on $X$, that is, a square root of the canonical bundle (see \cite[Appendix B]{acgh} and \cite{mum} for the basic theory of theta characteristics). Namely, we have
\begin{equation*}
    \omega_{X}\cong h^{*}\cO_{\bP^1}(-2)\otimes\cO_X(2R).
\end{equation*}
where $2R$ denotes the ramification divisor of $h$, which by assumption is a square. Thus, the divisor $$\Theta_h:=h^{*}\cO_{\bP^1}(-1)\otimes\cO_X(R)$$ is a theta characteristic on $X$.

Now, suppose $X$ is hyperelliptic and $h$ is equivariant for the hyperelliptic action, that is, $h$ is a point of $\cH_{g,\bZ/2\bZ,(1^{2g+2}),B}$. Assume further that the $|a_i|$ are all odd and that all $b_j$ are equal to 3. This situation generalizes that of \cite{mp} in that Weierstraß points are allowed over 0.

In this case, we can write
\begin{align*}
    h^{-1}(0)&=\sum_{i\in S_0}(2n_i+1)P_i+\frac{t_0}{2}H,\\
    h^{-1}(\infty)&=\sum_{i\in S_\infty}(2n_i+1)P_i+\frac{t_\infty}{2}H
\end{align*}
where $H$ is the hyperelliptic divisor on $X$.

If we denote by $R'$ the ramification divisor outside $h^{-1}(0)$ and $h^{-1}(\infty)$, we have $\deg(R')=4k$, where as before, $k$ is the number of $\bZ/r\bZ$-orbits of branch points of outside $0$ and $\infty$. In particular, $R'=2kH$. Thus, we find, up to linear equivalence,
\begin{align*}
    \Theta_h&=kH + \sum_{i \in S_0,S_\infty} n_i P_i - \sum_{i \in S_0} (2n_i+1) P_i - \frac{t_0}{2} H\\
    &=\left(k-\frac{t_0}{2}\right)H+ \sum_{i \in S_\infty} n_i P_i - \sum_{i \in S_0} (n_i+1) P_i.
\end{align*}

In the covers of \cite{mp}, we have $k=2g$, $t_0=4g$, $t_\infty=0$, and $S_0=\emptyset$, so in fact $\Theta_h$ is \textit{effective}. In our more general situation in which $S_0$ may be non-empty, note that even if we retain the conditions $(t_0,t_\infty)=(4g,0)$ and $k=2g$, any (possibly non-effective) theta characteristic of $X$ arises as above. Indeed, if
\begin{equation*}
    \Theta=\sum_{i=1}^{2g+2}c_iP_i,
\end{equation*}
then we may take $S_\infty$ to be the set of $i$ such that $c_i\ge0$, and $n_i=c_i$ for $i\in S_\infty$ and $n_{i}=-c_i-1$ for $i\in S_0$.

\section{Counting meromorphic functions on $\bP^1$} \label{counting_meromorphic_functions}

In this section, we carry out the proof of Theorem \ref{main_thm}. We have reduced to counting the possible meromorphic functions 
\begin{equation*}
h'(x)=\prod_{i=1}^{m}(x-\lambda_i)^{a_i}\cdot\left(\frac{P_0(x)}{P_{\infty}(x)}\right)^r,
\end{equation*}
where we impose the condition that, away from the $\lambda_i$ and the roots of $P_0, P_\infty$, the cover $h'$ has ramification indices (at distinct branch points) given by the multi-set $B$. We parametrize choices of $P_0, P_\infty$ by a product of projective spaces, and impose the ramification conditions from $B$ in terms of a combination of Segre and Veronese embeddings.

\subsection{The map $\psi$}

Let $\bP^{t_0/r},\bP^{t_\infty/r}$ be the projective spaces parametrizing non-zero polynomials $P_0, P_\infty$ of degrees (at most) $t_0/r,t_\infty/r$, up to scaling, and let $U\subset\bP^{t_0/r}\times\bP^{t_\infty/r}$ be the open locus consisting of polynomials $P_0, P_\infty$ satisfying the following properties:
\begin{itemize}
\item $P_0, P_\infty$ have no roots in common,
\item $P_0, P_\infty$ do not have roots at any of the $\lambda_i$,
\item $P_0, P_\infty$ have no double roots.
\end{itemize}
Here, we regard $x=\infty$ as a possible root of $P_0, P_\infty$. Thus, the statement that $P_0, P_\infty$ have no roots in common includes the statement that at most one of them has zero leading coefficient, and the statement that they have no double roots includes the statement that the first two coefficients of either polynomial do not both vanish.

If $(P_0, P_\infty)\in U$, the ramification divisor for $h'$ away from the $\lambda_i$ and the roots of $P_0, P_\infty$ is given by the roots of the polynomial
\begin{align*}
\psi_{h'}(x):&=\frac{1}{\prod_{i=1}^{m}(x-\lambda_i)^{a_i-1}}\cdot\frac{P_\infty^{r+1}}{P_0^{r-1}}\cdot\frac{d}{dx}h'(x)\\
&=\sum_{i=1}^{m} a_i\cdot\frac{\prod_{i=1}^{m}(x-\lambda_i)}{x-\lambda_i}\cdot P_0\cdot P_\infty+r\prod_{i=1}^{m}(x-\lambda_i)\cdot(P'_0P_\infty-P_0P'_\infty)
\end{align*}

The degree of $\psi_{h'}(x)$ is $b=\sum_j(b_j-1)$ (note that the leading terms of the two summands cancel), unless $\infty$ is among the additional ramification points of $h'$, in which the ramification index at $\infty$ is $b-\deg(\psi_{h'})+1$. 

Note that the definition of $\psi_{h'}(x)$ makes sense even for $(P_0, P_\infty)\notin U$, and that the roots of $\psi_{h'}(x)$ away from $\lambda_i$ and the roots of $P_0, P_\infty$, taken with multiplicity, still control the ramification of $h'$. In this case, $\psi_{h'}(x)$ will in general have extraneous roots among $\lambda_i$ and the roots of $P_0, P_\infty$.

We have always that $\psi_{h'}(x)$ is not identically zero, or else $h'$ is constant, which is impossible even for $(P_0, P_\infty)\in U$ because the $a_i$ are not divisible by $r$. Therefore, we may make the following definition.

\begin{defn}
We define $\psi:\bP^{t_0/r}\times\bP^{t_\infty/r}\to\bP^b$ by $\psi((P_0, P_\infty))=\psi_{h'}(x)$.
\end{defn}

The map $\psi$ is the composition of a Segre embedding followed by a linear projection whose base locus does not meet the image of $\bP^{t_0/r}\times\bP^{t_\infty/r}$, so its degree is equal to that of the Segre embedding, which is $\binom{t/r}{t_0/r}$.

\subsection{The map $\rho$}

We want to intersect $\psi$ with the locus of polynomials that arise as a product of the form $\prod_{\ell\in L}R_{\ell}(x)^\ell$, where $R_\ell$ has degree $c_\ell$. This is the image of the map defined below:
\begin{defn}
We define the map
\begin{equation*}
\rho:\prod_{\ell\in L}\bP^{c_\ell}\to\bP^b
\end{equation*}
by sending $(R_{\ell})_{\ell\in L}\mapsto \prod_{\ell\in L} R_\ell^{c_\ell}$. 
\end{defn}

The map $\rho$ is the composition of the $\ell$-Veronese embedding on the factor corresponding to $\ell\in L$, followed by a Segre embedding, followed by a linear projection. Because $\rho$ has no base locus, its degree is the degree of the composition of the first two maps, which is
\begin{equation*}
\prod_{\ell\in L}\ell^{c_{\ell}}\cdot\frac{(\sum_{\ell\in L}c_\ell)!}{\prod_{\ell\in L}c_\ell!}=\prod_{j=1}^{k}(b_j-1)\cdot\frac{k!}{\prod_{\ell\in L}c_\ell!}.
\end{equation*}

\subsection{Intersection of $\psi$ and $\rho$}

Theorem \ref{main_thm} now follows from B\'{e}zout's Theorem, assuming that, for $\lambda_i$ general, the intersections of $\psi$ and $\rho$ correspond exactly to points in the corresponding fiber of $\cH_{rat}$, and that this intersection is transverse. These statements are verified below, completing the proof of the main result.

\begin{prop}
Suppose the $\lambda_i\in\bC$ are general and consider the intersection of  $\psi,\rho$ in $\bP^b$. Then, any cover $h':\bP^1\to\bP^1$ defined by a point of intersection between $\psi,\rho$ is a point of $\cH_{\rat}$.
\end{prop}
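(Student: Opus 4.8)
The plan is to show that a point of intersection $(P_0, P_\infty; (R_\ell))$ between $\psi$ and $\rho$ in $\bP^b$ gives rise to a meromorphic function $h'$ whose ramification profile is exactly the one prescribed in the definition of $\cH_{\rat}$, and in particular that no ``collisions'' or ``extraneous'' behavior occurs for general $\lambda_i$. Concretely, writing $h'(x)=\prod_i (x-\lambda_i)^{a_i}\cdot (P_0(x)/P_\infty(x))^r$, I must verify for general $\lambda_i$ that: (a) $(P_0,P_\infty)\in U$, i.e. $P_0,P_\infty$ are squarefree with no common roots and no roots at the $\lambda_i$ (including at $x=\infty$); (b) the factorization $\psi_{h'}(x)=\prod_{\ell\in L} R_\ell(x)^\ell$ has each $R_\ell$ squarefree with the $R_\ell$ pairwise coprime, and these roots avoid the $\lambda_i$ and the roots of $P_0,P_\infty$; and (c) consequently the branch points of $h'$ away from $0,\infty$ are $k$ distinct points with ramification profile governed by $B$. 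Granting (a) and (b), (c) is immediate from the formula for $\psi_{h'}$ computed in \S\ref{counting_meromorphic_functions}: the order of vanishing of $\frac{d}{dx}h'$ at a point $x_0$ distinct from the $\lambda_i$ and the roots of $P_0,P_\infty$ equals the order of vanishing of $\psi_{h'}$ there, hence the ramification index of $h'$ at $x_0$ is $\ell+1$ exactly when $x_0$ is a root of $R_\ell$, and distinctness of branch points follows because the various roots lie in distinct fibers once we know they are distinct points (a general-position statement).

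The mechanism for (a), (b) is a dimension count: the ``bad'' locus where any of these degeneracies occurs is a proper closed subset of $\prod_\ell \bP^{c_\ell}$ (or of the incidence variety), so its image under the finite map $\rho$, intersected with $\psi(\bP^{t_0/r}\times\bP^{t_\infty/r})$, has strictly smaller dimension than the expected $0$-dimensional intersection; hence for general $\lambda_i$ it is empty. To make this rigorous I would work with the universal family over the $\lambda_i$: let $\mathcal{Z}\subset (\bP^1)^{m-3}\times \bP^{t_0/r}\times\bP^{t_\infty/r}\times\prod_\ell\bP^{c_\ell}$ be the incidence variety cut out by $\psi_{h'}(x)=\prod_\ell R_\ell(x)^\ell$ (an equality in $\bP^b$, i.e. up to scalar), and observe that $\cH_{\rat}$ sits inside the open part of $\mathcal{Z}$ where all the genericity conditions hold. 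One shows that $\mathcal{Z}$ is irreducible of dimension $m-3$ dominating $(\bP^1)^{m-3}$ with generic fiber the intersection in question, so the generic fiber is finite; then one argues that each of the degenerate loci (common root of $P_0,P_\infty$; double root; coincidence with a $\lambda_i$; $R_\ell$ not squarefree; $R_\ell$, $R_{\ell'}$ sharing a root) defines a proper closed subset of $\mathcal{Z}$ not dominating the base, hence avoided over general $\lambda_i$.

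I expect the main obstacle to be proving that $\mathcal{Z}$ (or the relevant component of it) genuinely dominates $(\bP^1)^{m-3}$ with finite generic fiber, equivalently that the degenerate loci are \emph{proper} in $\mathcal{Z}$ rather than comprising whole components — a priori $\psi$ and $\rho$ could meet only along a positive-dimensional locus, or only at degenerate points. The cleanest way around this is to exhibit at least one point of $\cH_{\rat}$ over some (equivalently, general) choice of $\lambda_i$: the existence of honest $\bZ/r\bZ$-equivariant covers with the prescribed profile — for instance by a patching/Riemann-existence argument, or by deforming from a boundary degeneration — forces the good locus to be nonempty, hence dense, in the unique component of $\mathcal{Z}$ of the expected dimension, and then the degenerate loci, being closed and proper, are avoided generically. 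An alternative, more hands-on route is to directly estimate the dimension of each degenerate locus by counting parameters (e.g. forcing a shared root costs one condition on the $R_\ell$'s while the matching with $\psi_{h'}$ is already codimension-$(m-3)$), but bookkeeping the Segre--Veronese image carefully is where the real work lies. Once nonemptiness and properness of the bad loci are in hand, transversality of the intersection (needed for the B\'ezout count to be the actual degree) follows from generic smoothness applied to the dominant map $\mathcal{Z}\to(\bP^1)^{m-3}$, exactly as in the discussion of $\phi_{\rat}$ in \S\ref{reduction_to_genus0}.
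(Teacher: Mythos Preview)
Your overall framework---a dimension count showing that the degenerate locus is too small to dominate the base $(\bP^1)^{m-3}$---matches the paper's, but the paper supplies precisely the ingredient you flag as the ``main obstacle.'' Rather than estimating the bad loci case by case on the Segre--Veronese side, or invoking a Riemann-existence argument to produce a good point, the paper argues uniformly on the Hurwitz side: if $(P_0,P_\infty)$ is degenerate (a common root, a double root, a root at some $\lambda_i$, or a drop in degree), then the resulting cover $h'$---viewed simply as a map $\bP^1\to\bP^1$, possibly of degree $<d$---has strictly fewer than $k$ branch points away from $0,\infty$. This is a short Riemann--Hurwitz bookkeeping: any such degeneration either increases the total ramification over $0,\infty$ without changing the degree, or drops the degree by more than it drops that ramification (e.g.\ a shared linear factor of $P_0,P_\infty$ lowers $\deg h'$ by $r$ but removes only $2r-2$ from the ramification over $0,\infty$). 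Either way the balance forces the number of remaining branch points below $k$, so such covers vary in a Hurwitz space of dimension $<k-1=m-3$, and the corresponding $\lambda_i$ lie in a proper subvariety. This single observation handles all of your cases (a)--(b) at once, including the distinct-branch-point condition you defer to ``a general-position statement,'' and it yields nonemptiness of the good locus for free: the intersection of $\psi,\rho$ is nonempty by B\'ezout, and for general $\lambda_i$ every intersection point is forced to be non-degenerate. Your incidence-variety packaging is fine, but the irreducibility of $\mathcal{Z}$ you assert is neither needed nor established; the paper bypasses it by working directly with the covers. (Transversality is a separate proposition in the paper, proved essentially as you suggest via generic smoothness of $\phi_{\rat}$.)
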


\begin{proof}
Suppose we have $h'(x)=\prod_{i=1}^{m}(x-\lambda_i)^{a_i}\cdot\left(\frac{P_0(x)}{P_{\infty}(x)}\right)^r$ defining a point of intersection of $\psi,\rho$ outside of  $\cH_{\rat}$. We claim that the corresponding map $h':\bP^1\to\bP^1$, possibly of degree less than $d$, has strictly fewer than $k$ branch points away from $0,\infty$, and thus varies in a family of dimension strictly less than that of $\cH_{\rat}$, so is avoided by a general choice of $\lambda_i$. (If $k=1$, then we find that $h'$ may only have two branch points, and hence two ramification points, but we see that this is impossible when $m\ge3$, as $r\neq a_i$.)

To see this, note first that if $h'$ has $k$ distinct ramification points away from $0,\infty$, then the ramification indices must be given exactly by the elements of $B$. Now, we claim that any choices of $P_0, P_\infty$ that define a cover $h'$ outside of $\cH_{\rat}$ increases the total amount of ramification of $h'$ over 0 and $\infty$ \textit{compared to the change in degree}, which cannot happen if the ramification away from $0,\infty$ is unchanged.

On one hand, suppose that the degree of $h'$ is $d$, so that $P_0, P_\infty$ have no roots in common and no roots at the $\lambda_i$. Then, the only way $h'$ can fail to define a point of $\cH_{\rat}$ is if some of the ramification imposed by the $\lambda_i$ and $P_0, P_\infty$ coalesces, either by allowing $P_0, P_{\infty}$ to have multiple roots or roots at $\lambda_i$. However, this increases the total amount of ramification of $h'$.

On the other hand, if $P_0, P_\infty$ are chosen to decrease the degree of $h'$, any such choice will decrease the degree of $h'$ strictly more than the amount of ramification lost. For example, if $P_0, P_\infty$ have a linear factor in common, the degree of $h'$ drops by $r$, which must decrease the total ramification by $2r$, but the amount of ramification lost over $0,\infty$ is only $2r-2$. Similarly, if $P_0$ or $P_\infty$ has a root at $\lambda$ canceling a factor of $(x-\lambda)$, we get a similar contradiction.

Therefore, if the $\lambda_i$ are chosen generally, the ramification for all covers in the intersection of $\psi,\rho$ is as expected.
\end{proof}

\begin{prop}
Suppose the $\lambda_i\in\bC$ are general. Then, the intersection of $\psi,\rho$ is transverse.
\end{prop}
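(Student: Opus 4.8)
The plan is to prove transversality by a dimension/degree count combined with the previous proposition. First I would observe that, by Bézout's theorem, the intersection cycle $\psi \cdot \rho$ in $\bP^b$ has total length $\deg(\psi)\cdot\deg(\rho)$ counted with multiplicity. By the previous proposition, for general $\lambda_i$ every point of the set-theoretic intersection corresponds to an honest point of $\cH_{\rat}$, i.e. to a cover $h'$ of full degree $d$ with $k$ distinct ramification points away from $0,\infty$ and ramification profile exactly $B$. On the other hand, by the reduction in \S\ref{reduction_to_genus0}, the fiber of $\phi_{\rat}:\cH_{\rat}\to(\bP^1)^{m-3}$ over a general point $(\lambda_4,\ldots,\lambda_m)$ is a reduced finite scheme of length $\deg(\phi_{\rat})=\deg(\phi_B)$. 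So it suffices to identify the fiber of $\phi_{\rat}$ with the scheme-theoretic intersection of $\psi$ and $\rho$: once the lengths agree and the fiber is reduced, the intersection must be transverse at every point.

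Next I would make this identification precise. A point of the intersection of $\psi$ and $\rho$ is a polynomial in $\bP^b$ lying in both images; unwinding the definitions, a choice of preimage under $\psi$ is a pair $(P_0,P_\infty)$ and a choice of preimage under $\rho$ is a tuple $(R_\ell)_{\ell\in L}$ with $\psi_{h'}(x) = \prod_{\ell\in L} R_\ell(x)^\ell$. By the previous proposition, for general $\lambda_i$ the pair $(P_0,P_\infty)$ lies in the open set $U$, so the preimage under $\psi$ is a single point (the Segre-then-projection map $\psi$ is injective on $U$, since $P_0 P_\infty$ and hence $(P_0,P_\infty)$ up to scaling is recovered from the leading-order data of $\psi_{h'}$, or more simply because a point of $U$ is determined by the divisor of $h'$ over $0$ and $\infty$), and likewise the $R_\ell$ are determined by $\psi_{h'}$ up to the finite ambiguity of distributing equal ramification indices, which is exactly the reordering accounted for by the $k!/\prod c_\ell!$ factor. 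The scheme structure on the intersection, pulled back along these local isomorphisms, matches the scheme structure on the fiber of $\phi_{\rat}$: both are cut out, locally near such a point, by the equations saying that $\psi_{h'}(x)$, a polynomial depending on the coordinates of $\bP^{t_0/r}\times\bP^{t_\infty/r}$, factors as $\prod R_\ell^\ell$, equivalently that each root of $\psi_{h'}$ occurs with the prescribed multiplicity.

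I would then conclude: generic smoothness of $\phi_{\rat}$ (valid in characteristic zero, which is our setting) gives that the general fiber is reduced of length $\deg(\phi_{\rat})$; the length of the Bézout intersection is $\deg(\psi)\cdot\deg(\rho)$, which we have computed and which is the claimed formula; and the two numbers are forced to be equal once we know the intersection is nonempty and that each intersection point contributes at least $1$ to each count. Hence every local intersection multiplicity is $1$, which is exactly transversality. The main obstacle is the scheme-theoretic bookkeeping in the middle paragraph: one must check carefully that the map from (pairs $(P_0,P_\infty)$ together with an ordering of the coalesced ramification points) to the intersection scheme is a local isomorphism at each point lying over $\cH_{\rat}$, i.e. that no unexpected tangent directions appear — equivalently, that the differentials of $\psi$ and $\rho$ together span $T_{\bP^b}$ at such a point. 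A clean way to package this is to note that $\cH_{\rat}$ is smooth of dimension $k-1$ (shown in \S\ref{reduction_to_genus0}) and that $\phi_{\rat}$ is dominant between smooth varieties of the same dimension, so generic smoothness applies directly; the transversality statement is then equivalent to the identification of the intersection scheme with a general fiber of $\phi_{\rat}$, and this identification is what requires care rather than any hard geometric input.
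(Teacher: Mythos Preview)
Your proposal contains the right ingredient --- generic smoothness of $\phi_{\rat}$ --- but the B\'ezout/length-counting framing is circular, and the step you yourself flag as ``requiring care'' is exactly the step you do not carry out.

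The circularity: you want to compare the B\'ezout length $\deg(\psi)\cdot\deg(\rho)$ with the length $\deg(\phi_{\rat})$ of a general fiber and conclude they are equal. But $\deg(\phi_{\rat})=\deg(\phi_B)$ is precisely the quantity Theorem~\ref{main_thm} is computing; you have no independent access to it. So the only way your argument can close is via the scheme-theoretic identification of the intersection with the fiber of $\phi_{\rat}$ --- and once you have that identification, reducedness of the fiber gives transversality directly, and the B\'ezout count plays no role in the proof of this proposition at all. In other words, the first paragraph of your proposal is a red herring for transversality.

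The gap: the assertion that the intersection scheme and the fiber of $\phi_{\rat}$ are ``cut out by the same equations'' is not justified. The fiber of $\phi_{\rat}$ carries the scheme structure coming from the Hurwitz moduli problem, not a priori from the factorization condition on $\psi_{h'}$. To identify the two you would need to check that a first-order deformation in the fiber product (a tangent vector to the intersection) is the same thing as a first-order deformation of $h'$ in $\cH_{\rat}$ with the $\lambda_i$ held fixed, and conversely. This is exactly what the paper does, but directly at the level of a single tangent vector: given a tangent vector to the intersection, one writes down the deformed meromorphic function $\widetilde{h'}=\prod(x-\lambda_i)^{a_i}\bigl((P_0+\epsilon Q_0)/(P_\infty+\epsilon Q_\infty)\bigr)^r$, observes it defines a tangent vector to $\cH_{\rat}$ lying over the fixed point $(\lambda_i)$, invokes generic unramifiedness of $\phi_{\rat}$ to conclude the deformation is trivial up to a scalar, and then reads off that $Q_0,Q_\infty$ are proportional to $P_0,P_\infty$ (trivial on the $\psi$ side) and the ramification is unchanged to first order (trivial on the $\rho$ side). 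Your proposal would arrive at the same place if you actually unwound the identification, but as written it stops short of doing so.
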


\begin{proof}
Suppose $\psi,\rho$ have a tangent vector. We then obtain a 1-parameter deformation 
\begin{equation*}
    \wt{h'}=\prod_{i=1}^{m}(x-\lambda_i)^{a_i}\cdot\left(\frac{P_0+\epsilon Q_0}{P_\infty+\epsilon Q_{\infty}}\right)^{r}
\end{equation*}
of the meromorphic function $h'$, which yields a 1-parameter deformation as a point of $\cH_{\rat}$. By construction, $\wt{h'}$ keeps the $\lambda_i$ constant on the target. Because the $\lambda_i$ are general and $\phi_{\rat}$ is generically unramified, $\wt{h'}$ must is in fact be the trivial deformation on $\cH_{\rat}$.

This means that, as meromorphic functions on $\bP^1\times\Spec\bC[\epsilon]/\epsilon^2$, we have 
\begin{equation*}
    \prod_{i=1}^{m}(x-\lambda_i)^{a_i}\cdot\left(\frac{P_0+\epsilon Q_0}{P_\infty+\epsilon Q_{\infty}}\right)^{r}=\mu\prod_{i=1}^{m}(x-\lambda_i)^{a_i}\cdot\left(\frac{P_0}{P_\infty}\right)^{r}
\end{equation*}
for a scalar $\mu\in \bC[\epsilon]/\epsilon^2$. We find that $Q_0,Q_\infty$ must be scalar multiples of $P_0, P_\infty$, corresponding to the trivial tangent vector on $\psi$, and the ramification remains unchanged to first order, corresponding to the trivial tangent vector on $\rho$. This completes the proof.
\end{proof}


\begin{thebibliography}{AMSa}
\bibitem[ACGH85]{acgh} Enrico Arbarello, Maurizio Cornalba, Phillip A. Griffiths and Joseph D. Harris, \emph{Geometry of algebraic curves. Vol. I.}, Springer-Verlag, New York (1985).

\bibitem[FP05]{fp} Carel Faber and Rahul Pandharipande, \emph{Relative maps and tautological classes}, J. Eur. Math. Soc. \textbf{7} (2005), 13-49.

\bibitem[FMNP20]{fmnp} Gavril Farkas, Riccardo Moschetti, Carlos Naranjo, Gian Pietro Pirola, \emph{Alternating Catalan numbers and curves with triple ramification}, Ann. Sc. Norm. Super. Pisa, to appear.

\bibitem[HM82]{hm} Joe Harris, David Mumford, \emph{On the Kodaira dimension of the moduli space of curves}. Invent. Math. \textbf{67} (1982), 23-86.

\bibitem[L19]{lian_pencils} Carl Lian, \emph{Enumerating pencils with moving ramification on curves}, J. Algebraic Geom., to appear.

\bibitem[MP20]{mp} Riccardo Moschetti and Gian Pietro Pirola, \emph{Hyperelliptic odd coverings}, Israel J. Math., to appear.

\bibitem[Mum71]{mum} David Mumford, \emph{Theta characteristics of an algebraic curve}, Ann. Sci. \'{E}c. Norm. Sup\'{e}r. \textbf{4} (1971), 181-192.

\bibitem[SvZ18]{svz} Johannes Schmitt and Jason van Zelm, \emph{Intersection of loci of admissible covers with tautological classes}, Selecta Math. (N.S.) \textbf{26}, Article Nr. 79 (2020).

\end{thebibliography}
\end{document}